\title{On well-conditioned spectral collocation and spectral methods by the integral reformulation
} 
\author{Kui Du\thanks{
School of Mathematical Sciences and Fujian Provincial Key Laboratory of Mathematical Modelling and High-Performance Scientific Computation, Xiamen University, Xiamen 361005, China ({kuidu@xmu.edu.cn}). The research of this author was supported by the National Natural Science Foundation of China (No.11201392 and No.91430213), the Doctoral Fund of Ministry of Education of China (No.20120121120020) and the Fundamental Research Funds for the Central Universities (No.2013121003).} 
}
\begin{document}
\maketitle
\slugger{mms}{xxxx}{xx}{x}{x--x}

\begin{abstract} Well-conditioned spectral collocation and spectral methods have recently been proposed to solve differential equations. In this paper, we revisit the well-conditioned spectral collocation methods proposed in [T.~A. Driscoll, {\it J. Comput. Phys.}, 229 (2010), pp.~5980-5998] and [L.-L. Wang, M.~D. Samson, and X.~Zhao, {\it SIAM J. Sci. Comput.}, 36 (2014), pp.~A907--A929], and the ultraspherical spectral method proposed in [S.~Olver and A.~Townsend, {\it SIAM Rev.}, 55 (2013), pp.~462--489] for an $m$th-order ordinary differential equation from the viewpoint of the integral reformulation. Moreover, we propose a Chebyshev spectral method for the integral reformulation. The well-conditioning of these methods is obvious by noting that the resulting linear operator is a compact perturbation of the identity. The adaptive QR approach for the ultraspherical spectral method still applies to the almost-banded infinite-dimensional system arising in the Chebyshev spectral method for the integral reformulation. Numerical examples are given to confirm the well-conditioning of the Chebyshev spectral method.   
\end{abstract}

\begin{keywords} Spectral collocation method, ultraspherical spectral method, Chebyshev spectral method, integral reformulation, well-conditioning
\end{keywords}

\begin{AMS} 65N35, 65L99, 33C45
\end{AMS}

\pagestyle{myheadings}
\thispagestyle{plain}
\markboth{KUI DU}{On well-conditioned spectral collocation and spectral methods}

\section{Introduction} In recent decades, spectral collocation and spectral methods have been extensively used for solving differential equations because of their high order of accuracy; see, for example, 
\cite{fornberg1996pract,trefethen2000spect,boyd2001cheby,canuto2006spect,shen2011spect} and the references therein. However, classical spectral collocation and spectral methods for differential equations lead to ill-conditioned matrices. For example, the condition number of the matrix in the rectangular spectral collocation method \cite{driscoll2015recta} for an $m$th-order differential operator grows like $\mcalo(N^{2m})$, where $N$ is the number of collocation points. Preconditioning techniques (see, for example, 
\cite{coutsias1996integ,coutsias1996effic,kim1996preco,kim1997preco,hesthaven1998integ,parter2001preco,parter2001precon,elbarbary2006integ}) and spectral integration techniques (see, for example, \cite{greengard1991spect,lee1997fast,driscoll2010autom,elgindy2013solvi,viswanath2015spect}) have been employed to improve the conditioning.

In this paper, from the viewpoint of the integral reformulation, we revisit recently proposed well-conditioned spectral collocation methods \cite{driscoll2010autom,wang2014well} and the ultraspherical spectral method \cite{olver2013fast} for solving the $m$th-order linear ordinary differential equation \beq\label{code}u^{(m)}(x)+\sum_{k=0}^{m-1}a^{k}(x)u^{(k)}(x)=f(x),\eeq together with $m$ linearly independent constraints \beq\label{mlc}\mcalb u={\bf b}.\eeq Here, $a^k(x)$, $u(x)$ and $f(x)$ are suitably smooth functions defined on $[-1,1]$ and ${\bf b}$ denotes a constant $m$-vector. 

Define the integral operators: $$\p_x^{-1} \phi(x)=\int_{-1}^x \phi(t)\rmd t; \qquad \p_x^{-k}\phi(x)=\p_x^{-1}\l(\p_x^{-(k-1)}\phi(x)\r), \quad k\geq 2.$$ Let $v(x)=u^{(m)}(x)$. Employing (\ref{mlc}), we can write 
\beqs\label{intv}u(x)=\p_x^{-m}v(x)+\mcalx(\mcalb\mcalx)^{-1}({\bf b}-\mcalb\p_x^{-m}v(x)),\eeqs  where $$\mcalx=\l[\begin{array}{cccc}1&x^1&\cdots&x^{m-1}\end{array}\r].$$ Here, $\mcalx$ can be replaced by any basis for the vector space of polynomials of degree less than $m$. We use this $\mcalx$ for clarity. The problem (\ref{code})-(\ref{mlc}) can be written as the following integral reformulation \beq\label{intref2}v(x)+\sum_{k=0}^{m-1}a^k(x)\p_x^{k-m}v(x)-\mcala(\mcalb\mcalx)^{-1}\mcalb\p_x^{-m}v(x)=f(x)-\mcala(\mcalb\mcalx)^{-1}{\bf b},\eeq where $$\mcala=\l[\begin{array}{cccc} a^0(x)& a^0(x)x+a^1(x)& \cdots & \sum_{k=0}^{m-1}\frac{(m-1)!}{(m-1-k)!}a^k(x)x^{m-1-k}\end{array}\r].$$ We rewrite (\ref{intref2}) as a linear operator equation \beq\label{oe}(\mcali-\mcalk)v(x)=g(x),\eeq where $\mcali$ denotes the identity operator, $$\mcalk v=-\sum_{k=0}^{m-1}a^k(x)\p_x^{k-m}v(x)+\mcala(\mcalb\mcalx)^{-1}\mcalb\p_x^{-m}v(x),$$ and $$g(x)=f(x)-\mcala(\mcalb\mcalx)^{-1}{\bf b}.$$ If the linear operator $\mcalb$ is bounded, then it is easy to show that $\mcalk$ is a compact operator. As noted in \cite{rokhlin1983solut,greengard1991spect,greengard1991numer,starr1994numer}, the linear system in the Galerikin method \cite{ikebe1972galer} for (\ref{oe}) is well-conditioned if both $\|\mcali-\mcalk\|$ and $\|(\mcali-\mcalk)^{-1}\|$ are  $\mcalo(1)$. 
\cite{greengard1991numer,starr1994numer}
The automatic spectral collocation method \cite{driscoll2010autom} for the problem (\ref{code})-(\ref{mlc}) is a Chebyshev spectral collocation scheme based on the integral equation (\ref{intref2}), which has been implemented in the Chebfun software system \cite{driscoll2014chebf}. In this paper, we propose a Chebyshev spectral method for the integral reformulation (\ref{intref2}).

The main contributions of this paper are in order. (I): Solving the Birkhoff interpolation problem (\ref{birkhoff}) by the integral formulation (\ref{form}) and deriving it from a right preconditioning rectangular spectral collocation method, we show that the well-conditioned spectral collocation method \cite{wang2014well} is essentially a stable implementation of the spectral collocation discretization of the integral equation (\ref{intref2}). (II): We show that the Chebyshev spectral method proposed in this work for the integral equation (\ref{intref2}) leads to an almost-banded infinite-dimensional system (see (\ref{scop1})), which can be solved by the adaptive QR approach in \cite{olver2013fast,olver2014pract}. Because only Chebyshev series are involved in the Chebyshev spectral method, no conversion operators $\mcals_k$ (see \S \ref{usm}) are required. We only need the multiplication operator $\mcalm_0[a]$ (see \S \ref{usm}), which represents multiplication of two Chebyshev series. Therefore, the Chebyshev spectral method is very easy to implement. We show that the Chebyshev spectral method for the integral equation (\ref{intref2}) is a two-sided preconditioned version of the discrete operator equation (\ref{dcode}) in the ultraspherical spectral method \cite{olver2013fast} for the differential problem  (\ref{code})-(\ref{mlc}). The Chebyshev spectral method is obviously well-conditioned   and avoids the drawbacks mentioned in \cite[\S 1.1.4]{olver2013fast}. Finally, for large bandwidth variable coefficients, iterative solvers can be employed to achieve computational efficiency because the involved matrix-vector product in the Chebyshev spectral method can be obtained in $\mcalo(n\log n)$ operations, where $n$ is the truncation parameter.
 
The rest of the paper is organized as follows. In \S 2, we derive the well-conditioned spectral collocation method \cite{wang2014well} for (\ref{code})-(\ref{mlc}) by utilizing a right preconditioning approach and show that it is a stable implementation of the spectral collocation discretization of the integral equation (\ref{intref2}). In \S 3, we review the ultraspherical spectral method \cite{olver2013fast}. We also point out the relation (see (\ref{mulcon})) between the multiplication operator $\mcalm_0[a]$ representing multiplication of two Chebyshev series and the multiplication operator $\mcalm_k[a]$ representing multiplication of two ultraspherical ($C^{(k)}$) series. In \S 4, utilizing the multiplication operator $\mcalm_0[a]$ and the spectral integration operator \cite{greengard1991spect}, we present the Chebyshev spectral method  and show the relation with the ultraspherical spectral method. Numerical examples confirming the well-conditioning of the Chebyshev spectral method are given in \S 5. We present brief concluding remarks in \S 6.

\section{A well-conditioned spectral collocation method} 
Let $\{x_j\}_{j=0}^N$ be a set of points satisfying \beq\label{iptsx} -1\leq x_0<x_1<\cdots <x_{N-1}<x_N\leq 1,\eeq and $\{y_j\}_{j=0}^M$ be another set of distinct points satisfying \beq\label{iptsy}-1\leq y_0< y_1< \cdots <y_{M-1}<y_M\leq 1.\eeq 
The barycentric weights associated with the points $\{x_j\}_{j=0}^N$ are defined by \beq\label{bweight}w_{j,{\bf x}}=\prod_{n=0,n\neq j}^N(x_j-x_n)^{-1}, \qquad j=0,1,\ldots,N.\eeq 
The barycentric resampling matrix \cite{driscoll2015recta}, ${\bf P}^{{\bf x}\mapsto {\bf y}}\in{\mbbr^{(M+1)\times(N+1)}}$, which interpolates between the points $\{x_j\}_{j=0}^N$ and $\{y_j\}_{j=0}^M$, is defined by \beqs{\bf P}^{{\bf x}\mapsto {\bf y}}=[p^{{\bf x}\mapsto {\bf y}}_{ij}]_{i=0,j=0}^{M,N},\eeqs where \beqs p^{{\bf x}\mapsto {\bf y}}_{ij}=\l\{\begin{array}{ll} \dsp\frac{w_{j,{\bf x}}}{y_i-x_j}\l(\sum_{l=0}^N\dsp\frac{w_{l,{\bf x}}}{y_i-x_l}\r)^{-1}, & y_i\neq x_j, \\ 1, & y_i=x_j.
\end{array} \r.\eeqs 
\begin{lemma}\label{mnnm}If $N\geq M$, then ${\bf P}^{{\bf x}\mapsto {\bf y}}{\bf P}^{{\bf y}\mapsto {\bf x}}={\bf I}_{M+1}.$ \end{lemma}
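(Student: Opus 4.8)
The plan is to read the resampling matrix ${\bf P}^{{\bf x}\mapsto{\bf y}}$ as the operator ``interpolate the given nodal values by the polynomial of degree at most $N$ through $\{x_j\}_{j=0}^N$, then evaluate that polynomial at $\{y_i\}_{i=0}^M$'', and likewise ${\bf P}^{{\bf y}\mapsto{\bf x}}$ as ``interpolate by the polynomial of degree at most $M$ through $\{y_i\}_{i=0}^M$, then evaluate at $\{x_j\}_{j=0}^N$''. Once this interpretation is established, the identity ${\bf P}^{{\bf x}\mapsto{\bf y}}{\bf P}^{{\bf y}\mapsto{\bf x}}={\bf I}_{M+1}$ follows from uniqueness of polynomial interpolation together with the degree count $M\le N$.

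The first step is to verify the interpolation--evaluation interpretation of ${\bf P}^{{\bf x}\mapsto{\bf y}}$: for every polynomial $p$ of degree at most $N$, if ${\bf f}=(p(x_0),\dots,p(x_N))^{\rm T}$ then ${\bf P}^{{\bf x}\mapsto{\bf y}}{\bf f}=(p(y_0),\dots,p(y_M))^{\rm T}$. This is exactly the second (true) barycentric form of the Lagrange interpolant: writing $\ell_{\bf x}(t)=\prod_{n=0}^N(t-x_n)$, one has the identities $p(t)=\ell_{\bf x}(t)\sum_{j=0}^N\frac{w_{j,{\bf x}}}{t-x_j}p(x_j)$ and $1=\ell_{\bf x}(t)\sum_{j=0}^N\frac{w_{j,{\bf x}}}{t-x_j}$, whose quotient evaluated at $t=y_i\notin\{x_j\}_j$ is precisely $\sum_{j=0}^N p^{{\bf x}\mapsto{\bf y}}_{ij}\,p(x_j)$; when $y_i=x_j$ for some (necessarily unique) $j$, the singular term in the denominator forces the $i$th row of ${\bf P}^{{\bf x}\mapsto{\bf y}}$ to be the unit row vector $e_j^{\rm T}$, which returns $p(y_i)=p(x_j)$ as well. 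Interchanging the roles of ${\bf x}$ and ${\bf y}$ gives the corresponding statement for ${\bf P}^{{\bf y}\mapsto{\bf x}}$ with degree threshold $M$ instead of $N$.

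With this in hand, fix an arbitrary ${\bf v}=(v_0,\dots,v_M)^{\rm T}\in\mathbb{R}^{M+1}$ and let $q$ be the unique polynomial of degree at most $M$ with $q(y_i)=v_i$ for $i=0,\dots,M$. Then ${\bf w}:={\bf P}^{{\bf y}\mapsto{\bf x}}{\bf v}=(q(x_0),\dots,q(x_N))^{\rm T}$. Since $\deg q\le M\le N$, the polynomial $q$ also serves as the (unique) degree-$\le N$ interpolant of the data $\{(x_j,w_j)\}_{j=0}^N$, so applying the first step to $p=q$ gives ${\bf P}^{{\bf x}\mapsto{\bf y}}{\bf w}=(q(y_0),\dots,q(y_M))^{\rm T}={\bf v}$. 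As ${\bf v}$ was arbitrary, ${\bf P}^{{\bf x}\mapsto{\bf y}}{\bf P}^{{\bf y}\mapsto{\bf x}}={\bf I}_{M+1}$. The hypothesis $N\ge M$ is used exactly once, in the degree count that lets $q$ be reinterpreted as a degree-$\le N$ interpolant of its own values at the $x_j$; for $N<M$ the $N+1$ numbers $w_j$ do not determine $q$ and the conclusion fails. I expect the only genuinely delicate point to be the bookkeeping in the first step --- confirming that the matrix entries as defined (including the convention that assigns the value $1$ to a coincident-node entry and, implicitly, $0$ to the other entries in that row) coincide with the barycentric evaluation coefficients; everything after that is just uniqueness of interpolation and the degree inequality.
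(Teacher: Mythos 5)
Your proof is correct. The paper itself states Lemma \ref{mnnm} without proof (it is imported from the rectangular spectral collocation reference of Driscoll and Hale), so there is no in-paper argument to compare against; your route --- establish that ${\bf P}^{{\bf x}\mapsto{\bf y}}$ reproduces exactly the values of any polynomial of degree at most $N$ via the equivalence of the second barycentric form with the Lagrange interpolant, then observe that the degree-$\le M$ interpolant $q$ of an arbitrary ${\bf v}$ is also the degree-$\le N$ interpolant of its own samples at the $x_j$ because $M\le N$ --- is the standard one and is complete. You are right that the only delicate point is the coincident-node convention ($p^{{\bf x}\mapsto{\bf y}}_{ij}=1$ when $y_i=x_j$, with the remaining entries of that row vanishing as the limit of the barycentric quotient), and you handle it correctly.
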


\subsection{Pseudospectral differentiation matrices}\label{sdm} 

The Lagrange interpolation basis polynomials of degree $N$ associated with the points $\{x_j\}_{j=0}^N$ are defined by \beqs\label{cf}  \ell_{j,{\bf x}}(x)=w_{j,{\bf x}}\prod_{n=0,n\neq j}^N(x-x_n),\qquad j=0,1,\ldots,N,\eeqs where $w_{j,{\bf x}}$ is the barycentric weight (\ref{bweight}). Define the pseudospectral differentiation matrices: \beqs\label{psdm} {\bf D}^{(k)}_{{\bf x}\mapsto {\bf x}}=\l[ k\ell_{j,{\bf x}}^{(k)}(x_i)\r]_{i,j=0}^N,\qquad {\bf D}^{(k)}_{{\bf x}\mapsto {\bf y}}=\l[ k\ell_{j,{\bf x}}^{(k)}(y_i)\r]_{i=0,j=0}^{M,N}.
\eeqs There hold $${\bf D}^{(k)}_{{\bf x}\mapsto {\bf y}}={\bf P}^{{\bf x}\mapsto {\bf y}}{\bf D}^{(k)}_{{\bf x}\mapsto {\bf x}},\qquad {\bf D}^{(0)}_{{\bf x}\mapsto {\bf y}}={\bf P}^{{\bf x}\mapsto {\bf y}},$$ and $${\bf D}^{(k)}_{{\bf x}\mapsto {\bf x}}=\l({\bf D}^{(1)}_{{\bf x}\mapsto {\bf x}}\r)^k,\qquad k\geq 1.$$

The matrix ${\bf D}^{(k)}_{{\bf x}\mapsto {\bf y}}$ is called a rectangular $k$th-order differentiation matrix, which maps values of a polynomial defined on $\{x_j\}_{j=0}^N$ to the values of its $k$th-order derivative on $\{y_j\}_{j=0}^M$. Explicit formulae and recurrences for pseudospectral differentiation matrices can be found in, for example, \cite{weideman2000matla,xu2015expli}. 

\subsection{Pseudospectral integration matrices} 
Given $\{y_j\}_{j=0}^{M}$ and ${\bf b}$ 
with $m\geq 1$, we consider the Birkhoff-type  interpolation problem: \beq\label{birkhoff} {\rm Find}\ p(x)\in\mbbp_{M+m}\ {\rm such \ that}\ \l\{ \begin{array}{ll} p^{(m)}(y_j)=u^{(m)}(y_j),& j=0,\ldots,M,  \\ \mcalb p={\bf b}.&\end{array}\r.\eeq Let $\{\ell_{j,{\bf y}}(x)\}_{j=0}^{M}$ be the Lagrange interpolation basis polynomials of degree $M$ associated with the points $\{y_j\}_{j=0}^{M}$. Then the Birkhoff-type interpolation polynomial takes the form \beq\label{form} p(x)=\sum_{j=0}^Mu^{(m)}(y_j) \p_x^{-m}\ell_{j,{\bf y}}(x)+\sum_{i=0}^{m-1}{\alpha_i}x^i,\eeq where $\alpha_i$ can be determined by the linear constraints $\mcalb p=\l[\begin{array}{cccc}b_1 & b_2 & \cdots & b_m\end{array}\r]^\rmt.$ Obviously, the existence and uniqueness of the Birkhoff-type interpolation polynomial is equivalent to that of $\{\alpha_i\}_{i=0}^{m-1}$. After obtaining $\alpha_i$, we can rewrite (\ref{form}) as \beqs\label{bip}p(x)=\sum_{j=0}^{M}u^{(m)}(y_j)B_{j,{\bf y}}(x)+\sum_{j=1}^{m}b_jB_{M+j,{\bf y}}(x).\eeqs 

Now we give some examples for $\{B_{j,{\bf y}}\}_{j=0}^{M+m}$. The first-order Birkhoff-type interpolation problem takes the form: \beqs\label{birkhoff1}{\rm Find} \ \ p(x)\in\mbbp_{M+1} \ \ {\rm such \ that} \ \ \l\{ \begin{array}{ll} p'(y_j)=u'(y_j),& j=0,1,\ldots,M,  \\ \mcalb p= b_1. & \end{array}\r.\eeqs  
\begin{itemize}


\item Given $\mcalb p:= ap(-1)+bp(1)$  with $a+b\neq 0$, we have
\beqas && B_{j,{\bf y}}(x)=\p_x^{-1}\ell_{j,{\bf y}}(x)-\frac{b}{a+b}\int_{-1}^1\ell_{j,{\bf y}}(x)\rmd x, \quad j=0,1,\ldots,M,\\ &&  B_{M+1,{\bf y}}(x)=\frac{1}{a+b}.\eeqas 

\item Given $\dsp\mcalb p:= \int_{-1}^1 p(x)\rmd x$, we have \beqas && B_{j,{\bf y}}(x)=\p_x^{-1}\ell_{j,{\bf y}}(x)-\frac{1}{2}\int_{-1}^1\p_x^{-1}\ell_{j,{\bf y}}(x)\rmd x, \quad j=0,1,\ldots,M,\\ &&  B_{M+1,{\bf y}}(x)=\frac{1}{2}.\eeqas 
\end{itemize}
The second-order Birkhoff-type interpolation problem takes the form: \beqs\label{birkhoff2}{\rm Find} \ \ p(x)\in\mbbp_{M+2} \ \ {\rm such \ that} \ \ \l\{ \begin{array}{ll} p''(y_j)=u''(y_j),& j=0,1,\ldots,M,  \\ \mcalb p={\bf b}. &\end{array}\r.\eeqs    
\begin{itemize}

\item Given \beq\label{lc2}\mcalb p=\l[\begin{array}{c}ap(-1)+bp(1)\\ \dsp\int_{-1}^1p(x)\rmd x  \end{array}\r],\qquad a\neq b,\eeq
we have 
\beqas && B_{j,{\bf y}}(x)=\p_x^{-2}\ell_{j,{\bf y}}(x)-\frac{bx}{b-a}\int_{-1}^1\p_x^{-1}\ell_{j,{\bf y}}(x)\rmd x
\\ &&\qquad\qquad +\l(\frac{(a+b)x}{2(b-a)}-\frac{1}{2}\r)\int_{-1}^1\p_x^{-2}\ell_{j,{\bf y}}(x)\rmd x,\quad j=0,1,\ldots,M, \\ && B_{M+1,{\bf y}}(x)=\frac{x}{b-a}, \\ && B_{M+2,{\bf y}}(x)=\frac{1}{2}-\frac{(a+b)x}{2(b-a)},
\eeqas and \beqas && B_{j,{\bf y}}'(x)=\p_x^{-1}\ell_{j,{\bf y}}(x)-\frac{b}{b-a}\int_{-1}^1\p_x^{-1}\ell_{j,{\bf y}}(x)\rmd x
\\ &&\qquad\qquad +\frac{a+b}{2(b-a)}\int_{-1}^1\p_x^{-2}\ell_{j,{\bf y}}(x)\rmd x,\quad j=0,1,\ldots,M, \\ && B_{M+1,{\bf y}}'(x)=\frac{1}{b-a}, \\ && B_{M+2,{\bf y}}'(x)=-\frac{a+b}{2(b-a)}.
\eeqas
\end{itemize} 

Let $N=M+m$ and $\{x_i\}_{i=0}^N$ be the points as in (\ref{iptsx}). Define the $m$th-order {\it pseudospectral integration matrix} (PSIM) as:
\beqs\label{psim}{\bf B}^{(-m)}_{{\bf y}\mapsto {\bf x}}=\l[B_{j,{\bf y}}(x_i)\r]_{i,j=0}^N.\eeqs Define the matrices $${\bf B}^{(k-m)}_{{\bf y}\mapsto {\bf x}}=\l[B_{j}^{(k)}(x_i)\r]_{i,j=0}^N,\qquad k\geq 1.$$ It is easy to show that \beqs\label{dkqm}{\bf B}^{(k-m)}_{{\bf y}\mapsto {\bf x}}={\bf D}^{(k)}_{{\bf x}\mapsto {\bf x}}{\bf B}^{(-m)}_{{\bf y}\mapsto {\bf x}},\qquad k\geq 1.\eeqs
The matrices ${\bf B}^{(-m)}_{{\bf y}\mapsto {\bf x}}$ and ${\bf B}^{(k-m)}_{{\bf y}\mapsto {\bf x}}$ can be computed stably even for thousands of collocation points; see \cite{wang2014well,du2015prersc} for details.

 
\begin{theorem}\label{main} Let ${\bf L}_\mcalb$ be the discretization of the linear operator $\mcalb$. If for any $p(x)\in\mbbp_N$, \beq\label{discond}\mcalb p= {\bf L}_\mcalb\l[\begin{array}{c} p(x_0) \\ \vdots \\ p(x_N)\end{array}\r],\eeq then \beqs\label{identity} \l[\begin{array}{c}{\bf D}^{(m)}_{{\bf x}\mapsto {\bf y}}\\ {\bf L}_\mcalb\end{array}\r]{\bf B}^{(-m)}_{{\bf y}\mapsto {\bf x}}={\bf I}_{N+1}. \eeqs \end{theorem}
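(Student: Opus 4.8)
The plan is to verify the claimed identity by checking what the block matrix on the left does to each column of ${\bf B}^{(-m)}_{{\bf y}\mapsto{\bf x}}$, using the key property that this PSIM is built from the Birkhoff-type interpolation polynomials $B_{j,{\bf y}}(x)$, which live in $\mbbp_N$ with $N=M+m$. The idea is that column $j$ of the product is obtained by applying the two operators $\mbf{D}^{(m)}_{\bfx\mapsto\bfy}$ and $\mbf L_\mcalb$ to the vector of values $\bigl[B_{j,{\bf y}}(x_0),\ldots,B_{j,{\bf y}}(x_N)\bigr]^\rmt$, and since $B_{j,{\bf y}}\in\mbbp_N$ the discrete operators act exactly.

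First I would treat the columns $j=0,\ldots,M$. Applying $\mbf D^{(m)}_{\bfx\mapsto\bfy}$ to the nodal values of $B_{j,{\bf y}}$ returns $\bigl[B_{j,{\bf y}}^{(m)}(y_0),\ldots,B_{j,{\bf y}}^{(m)}(y_M)\bigr]^\rmt$ exactly, because the rectangular differentiation matrix differentiates polynomials of degree $\le N$ without error and $\mbf D^{(m)}_{\bfx\mapsto\bfy}=\mbf P^{\bfx\mapsto\bfy}\mbf D^{(m)}_{\bfx\mapsto\bfx}$. From the representation (\ref{form}) we have $B_{j,{\bf y}}^{(m)}(x)=\ell_{j,{\bf y}}(x)$ (the polynomial part $\sum\alpha_i x^i$ has degree $<m$, so it is annihilated by the $m$th derivative, and $\p_x^{-m}\ell_{i,{\bf y}}$ differentiates back to $\ell_{i,{\bf y}}$), hence $B_{j,{\bf y}}^{(m)}(y_i)=\ell_{j,{\bf y}}(y_i)=\delta_{ij}$ by the cardinal property of the Lagrange basis. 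This gives the standard basis vector $\bfe_j$ in the first $M+1$ rows. For the last $m$ rows, $\mbf L_\mcalb$ applied to the nodal values of $B_{j,{\bf y}}$ equals $\mcalb B_{j,{\bf y}}$ by (\ref{discond}) since $B_{j,{\bf y}}\in\mbbp_N$; but by construction $p(x)=\sum_{j}u^{(m)}(y_j)B_{j,{\bf y}}(x)+\sum_j b_j B_{M+j,{\bf y}}(x)$ solves the Birkhoff problem (\ref{birkhoff}) for every choice of data, which forces $\mcalb B_{j,{\bf y}}=0$ for $j=0,\ldots,M$ and $\mcalb B_{M+j,{\bf y}}=\bfe_j$ for $j=1,\ldots,m$ (comparing with $\mcalb p=\mbf b$). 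So column $j\le M$ is $\bfe_j$, and column $M+j$ ($1\le j\le m$) has zeros in the first $M+1$ entries and $\bfe_j$ in the bottom block — exactly the columns of $\mbf I_{N+1}$.

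The remaining routine check is that the degree bookkeeping is consistent: $N+1=M+m+1$ equals the number of basis functions $\{B_{j,{\bf y}}\}_{j=0}^{M+m}$, so the product is genuinely $(N+1)\times(N+1)$, and the same argument with the identities $\mbf B^{(k-m)}_{\bfy\mapsto\bfx}=\mbf D^{(k)}_{\bfx\mapsto\bfx}\mbf B^{(-m)}_{\bfy\mapsto\bfx}$ shows the construction is internally coherent. The only real subtlety — the step I would be most careful about — is justifying that the discrete operators are \emph{exact} on the columns: this needs $B_{j,{\bf y}}\in\mbbp_N$ (true since each $\p_x^{-m}\ell_{j,{\bf y}}$ has degree $M+m=N$ and the correction terms $\sum\alpha_i x^i$ have degree $<m\le N$), together with the hypothesis (\ref{discond}) that ${\bf L}_\mcalb$ reproduces $\mcalb$ on all of $\mbbp_N$, not merely on a subspace. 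Once exactness is in hand, the identity is just the cardinal/interpolation property of the Birkhoff basis read off columnwise.
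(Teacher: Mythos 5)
Your proof is correct and follows essentially the same route as the paper: the paper's one-line proof rests on the identities ${\bf D}^{(m)}_{{\bf x}\mapsto {\bf x}}{\bf B}^{(-m)}_{{\bf y}\mapsto {\bf x}}=[\,{\bf P}^{{\bf y}\mapsto {\bf x}}\ \ {\bm 0}\,]$ and ${\bf L}_\mcalb{\bf B}^{(-m)}_{{\bf y}\mapsto {\bf x}}=[\,{\bm 0}\ \ {\bf I}_m\,]$ together with Lemma \ref{mnnm}, and your columnwise verification (exactness of the rectangular differentiation matrix on $\mbbp_N$, $B_{j,{\bf y}}^{(m)}=\ell_{j,{\bf y}}$, and $\mcalb B_{j,{\bf y}}=0$ versus $\mcalb B_{M+j,{\bf y}}={\bf e}_j$) is just these same facts spelled out in detail. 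No gaps.
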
 \begin{proof} The result follows from \beqs{\bf D}^{(m)}_{{\bf x}\mapsto {\bf x}}{\bf B}^{(-m)}_{{\bf y}\mapsto {\bf x}}=\l[\begin{array}{ll} {\bf P}^{{\bf y}\mapsto {\bf x}} & {\bm 0}\end{array}\r],\qquad {\bf L}_\mcalb{\bf B}^{(-m)}_{{\bf y}\mapsto {\bf x}}=\l[\begin{array}{ll}{\bm 0}& {\bf I}_m\end{array}\r],\eeqs and Lemma \ref{mnnm}. \end{proof}
 
Now we give a concrete example for ${\bf L}_\mcalb$ satisfying (\ref{discond}). Let $\mcalb$ be given by (\ref{lc2}). 
It is straightforward to discretize: $${\bf L}_\mcalb=\l[\begin{array}{ccccc}a&0 &\cdots & 0 & b\\ \omega_0 & \omega_1 & \cdots &\cdots & \omega_N \end{array}\r],$$ where $\{\omega_j\}_{j=0}^N$ are Clenshaw-Curtis quadrature weights \cite{funaro1992polyn}.

\subsection{The method}
Let $\{x_j\}_{j=0}^N$ (with $N=M+m$) and $\{y_j\}_{j=0}^M$ be the points as  defined in (\ref{iptsx}) and (\ref{iptsy}), respectively.
The rectangular spectral collocation method \cite{driscoll2015recta} for (\ref{code})-(\ref{mlc}) finds a column vector ${\bf u}$ as an approximation of the vector of the exact solution $u(x)$ at the points $\{x_j\}_{j=0}^N$, i.e., $${\bf u}\approx\l[\begin{array}{cccc} u(x_0) & u(x_1) & \cdots & u(x_N)\end{array}\r]^\rmt.$$ The collocation scheme for (\ref{code}) is given by $${\bf A}_{M+1}{\bf u}={\bf f},$$ where $${\bf A}_{M+1}={\bf D}^{(m)}_{{\bf x}\mapsto {\bf y}}+\diag\{{\bf a}^{m-1}\}{\bf D}^{(m-1)}_{{\bf x}\mapsto {\bf y}}+\cdots+
\diag\{{\bf a}^1\}{\bf D}^{(1)}_{{\bf x}\mapsto {\bf y}}+\diag\{{\bf a}^0\}{\bf D}^{(0)}_{{\bf x}\mapsto {\bf y}}.$$ Here we use boldface letters to indicate a column vector obtained by discretizing at the points $\{y_j\}_{j=0}^M$ except for the unknown ${\bf u}$. For example, \beqas &&{\bf a}^0=\l[\begin{array}{cccc}a^0(y_0)& a^0(y_1) & \cdots & a^0(y_M)\end{array}\r]^\rmt, \\ &&{\bf f}=\l[\begin{array}{cccc}f(y_0)& f(y_1) & \cdots & f(y_M)\end{array}\r]^\rmt.\eeqas Let ${\bf L}_\mcalb{\bf u}={\bf b}$ be the discretization of the linear constraints (\ref{mlc}) and satisfy the condition (\ref{discond}). The global collocation system is given by \beq\label{gcs} {\bf Au=g}\eeq where \beqas &&{\bf A}=\l[\begin{array}{c}{\bf A}_{M+1}\\ {\bf L}_\mcalb\end{array}\r],\quad  {\bf g}=\l[\begin{array}{c} {\bf f}\\ {\bf b}\end{array}\r].\eeqas

Consider the pseudospectral integration matrix ${\bf B}^{(-m)}_{{\bf y}\mapsto {\bf x}}$ (\ref{psim}) as a right preconditioner for the linear system (\ref{gcs}). We need to solve the right preconditioned linear system \beqs{\bf A}{\bf B}^{(-m)}_{{\bf y}\mapsto {\bf x}}{\bf v}={\bf g}.\eeqs 
There hold $${\bf A}_{M+1}{\bf B}^{(-m)}_{{\bf y}\mapsto {\bf x}}\l[\begin{array}{c}{\bf I}_{M+1} \\ {\bm 0}\end{array}\r]={\bf I}_{M+1}+\diag\{{\bf a}_{m-1}\}\wt{\bf B}^{(m-1-m)}_{{\bf y}\mapsto {\bf y}}+\cdots+\diag\{{\bf a}_0\}\wt{\bf B}^{(0-m)}_{{\bf y}\mapsto {\bf y}},$$ 
and \beqs
{\bf A}_{M+1}{\bf B}^{(-m)}_{{\bf y}\mapsto {\bf x}}\l[\begin{array}{c}{\bm 0}\\ {\bf I}_{m} \end{array}\r]= \diag\{{\bf a}_{m-1}\}\wh{\bf B}^{(m-1-m)}_{{\bf y}\mapsto {\bf y}}+\cdots+\diag\{{\bf a}_0\}\wh{\bf B}^{(0-m)}_{{\bf y}\mapsto {\bf y}},\eeqs where, for $k=0,1,\cdots,m-1$, $$\wt{\bf B}^{(k-m)}_{{\bf y}\mapsto {\bf y}}=\l[B_{j,{\bf y}}^{(k)}(y_i)\r]_{i,j=0}^M,\qquad \wh{\bf B}^{(k-m)}_{{\bf y}\mapsto {\bf y}}=\l[B_{j,{\bf y}}^{(k)}(y_i)\r]_{i=0,j=M+1}^{M,M+m}.$$ By (see Theorem \ref{main}) $${\bf L}_\mcalb{\bf B}^{(-m)}_{{\bf y}\mapsto {\bf x}}=\l[\begin{array}{ll}{\bm 0}& {\bf I}_m\end{array}\r],$$ we have 
$${\bf v}=\l[\begin{array}{c} {\bf v}_{M+1} \\ {\bf b} \end{array}\r],$$ and
\beq\label{modal2}\wt{\bf A}_{M+1}{\bf v}_{M+1}=\wt{\bf f},\eeq
where $$\wt{\bf A}_{M+1}={\bf I}_{M+1}+\diag\{{\bf a}_{m-1}\}\wt{\bf B}^{(m-1-m)}_{{\bf y}\mapsto {\bf y}}+\cdots+\diag\{{\bf a}_0\}\wt{\bf B}^{(0-m)}_{{\bf y}\mapsto {\bf y}},$$ and $$\wt{\bf f}={\bf f}-\l(\diag\{{\bf a}_{m-1}\}\wh{\bf B}^{(m-1-m)}_{{\bf y}\mapsto {\bf y}}+\cdots+\diag\{{\bf a}_0\}\wh{\bf B}^{(0-m)}_{{\bf y}\mapsto {\bf y}}\r){\bf b}.$$
Obviously, ${\bf v}_{M+1}$ is an approximation of the vector $${\bf v}_{M+1}\approx\l[\begin{array}{cccc}u^{(m)}(y_0) & u^{(m)}(y_1) & \cdots & u^{(m)}(y_M)\end{array}\r]^\rmt.$$ After solving (\ref{modal2}), we obtain ${\bf u}$ by $${\bf u}={\bf B}^{(-m)}_{{\bf y}\mapsto {\bf x}}\l[\begin{array}{c} {\bf v}_{M+1} \\ {\bf b} \end{array}\r].$$

By the above derivation, the linear system $(\ref{modal2})$ is the same as that obtained by the following collocation scheme for the integral reformulation $(\ref{intref2})$: (i) substitute $v(x)=\sum_{j=0}^Mu^{(m)}(y_j)\ell_{j,{\bf y}}(x)$ into $(\ref{intref2})$; (ii) collocate the resulting integral equation on points $\{y_j\}_{j=0}^M$. It follows from this observation that the condition number of the coefficient matrix $\wt {\bf A}_{M+1}$ in $(\ref{modal2})$ is independent of the number of collocation points. Numerical experiments showing this point are reported in \cite{wang2014well,du2015prersc}.

\section{The ultraspherical spectral method}\label{usm} 
The ultraspherical spectral method \cite{olver2013fast} finds an infinite vector $${\bf u}=\l[\begin{array}{ccc}u_0 & u_1 & \cdots\end{array}\r]^\rmt$$ such that the Chebyshev expansion of the solution of (\ref{code})-(\ref{mlc}) is given by $$u(x)=\sum_{j=0}^\infty u_j T_j(x),\qquad x\in[-1,1],$$ where $T_j(x)$ is the degree $j$ Chebyshev polynomial \cite{funaro1992polyn,olver2010nist}. We review the details of this method in the following.

Note that the following recurrence relation $$\frac{\rmd^ k T_n}{\rmd x^ k}=\l\{\begin{array}{ll} 2^{ k-1}n( k-1)!C_{n- k}^{( k)}, & n\geq k,\\ 0, & 0\leq n\leq  k-1, \end{array}\r.$$ where $C_j^{( k)}$ is the ultraspherical polynomial with an integer parameter $ k\geq 1$ of degree $j$ \cite{funaro1992polyn,olver2010nist}.
Then the differentiation operator for the $ k$th derivative is given by \beq\label{l0}\mcald_ k=2^{ k-1}( k-1)!\l[\begin{array}{ccccc} {\bf 0}& k&&& \\ && k+1&&\\ &&& k+2&\\ &&&&\ddots\end{array}\r], \quad  k\geq 1.\eeq Here $\bf 0$ in (\ref{l0})
denotes a $ k$-dimensional zero row vector. 

The conversion operator converting a vector of Chebyshev expansion coefficients to a vector of $C^{(1)}$ expansion coefficients, denoted by $\mcals_0$, and the conversion operator converting a vector of $C^{( k)}$ expansion coefficients to a vector of $C^{( k+1)}$ expansion coefficients, denoted by $\mcals_ k$, are given by $$\mcals_0=\l[\begin{array}{ccccc}1 &0 &-\frac{1}{2}&& \\ &\frac{1}{2}&0&-\frac{1}{2}&\\ &&\frac{1}{2}&0 &\ddots\\ &&&\frac{1}{2}&\ddots\\ &&&&\ddots \end{array}\r],\quad \mcals_ k=\l[\begin{array}{ccccc}1 &0 &-\frac{ k}{ k+2}&& \\ &\frac{ k}{ k+1}&0&-\frac{ k}{ k+3}&\\ &&\frac{ k}{ k+2}&0 &\ddots\\ &&&\frac{ k}{ k+3}&\ddots\\ &&&&\ddots \end{array}\r],\  k\geq 1.$$ 

We also require the multiplication operator $\mcalm_0[a]$ that represents multiplication of two Chebyshev series, and the multiplication operator $\mcalm_ k[a]$ that represents multiplication of two $C^{( k)}$ series, i.e., if ${\bf u}$ is a vector of Chebyshev expansion coefficients of $u(x)$, then $\mcalm_0[a]{\bf u}$ returns the Chebyshev expansion coefficients of $a(x)u(x)$, and  $\mcalm_ k[a]\mcals_{ k-1}\cdots\mcals_0{\bf u}$ returns the $C^{( k)}$ expansion coefficients of $a(x)u(x)$. Suppose that $a(x)$ has the Chebyshev expansion $$a(x)=\sum_{j=0}^\infty a_jT_j(x).$$ Then $\mcalm_0[a]$ can be written as  \cite{olver2013fast}:
$$\mcalm_0[a]=\frac{1}{2}\l[\begin{array}{ccccc}2a_0 & a_1 & a_2 & a_3 & \cdots\\ a_1 &2a_0 & a_1 & a_2 &\ddots \\ a_2 & a_1 & 2a_0 & a_1 & \ddots \\ a_3 & a_2& a_1& 2a_0& \ddots \\ \vdots & \ddots &\ddots &\ddots &\ddots \end{array}\r]+\frac{1}{2}\l[\begin{array}{ccccc} 0 & 0 & 0 & 0 & \cdots\\ a_1 &a_2 & a_3 & a_4 &\cdots \\ a_2 & a_3 & a_4 & a_5 & \iddots \\ a_3 & a_4& a_5& a_6& \iddots \\ \vdots & \iddots &\iddots &\iddots &\iddots \end{array}\r].$$ It follows from $$\mcalm_k[a]\mcals_{k-1}=\mcals_{k-1}\mcalm_{k-1}[a]$$ that
\beq\label{mulcon}\mcalm_k[a]\mcals_{k-1}\mcals_{k-2}\cdots\mcals_0=\mcals_{k-1}\mcals_{k-2}\cdots\mcals_0\mcalm_0[a].\eeq
The explicit formula for the entries of $\mcalm_ k[a]$ with $ k\geq 1$ is given in \cite{olver2013fast}, which uses the formula given in \cite{carlitz1961produ}.  These multiplication operators $\mcalm_ k[a]$ with $ k\geq 0$  look dense; however, if $a(x)$ is approximated by a truncation of its Chebyshev or $C^{( k)}$ series, then $\mcalm_ k[a]$ is banded. 

Combining the differentiation, conversion and multiplication operators yields \beq\label{dcode}\mcall{\bf u}=\mcals_{m-1}\mcals_{m-2}\cdots\mcals_0{\bf f},\eeq where $$\mcall:=\mcald_m+\sum_{k=1}^{m-1}\mcals_{m-1}\mcals_{m-2}\cdots\mcals_{k}\mcalm_k[a^{k}]\mcald_k+\mcals_{m-1}\mcals_{m-2}\cdots\mcals_0\mcalm_0[a^0],$$ ${\bf u}$ and ${\bf f}$ are column vectors of Chebyshev expansion coefficients of $u(x)$ and $f(x)$, respectively. Assume that the linear constraint operator $\mcalb$ in (\ref{mlc}) is given in terms of the Chebyshev coefficients of $u$, i.e., \beqs\label{dlc}\mcalb{\bf u}={\bf b}.\eeqs For example, for Dirichlet boundary conditions $$\mcalb=\l[\begin{array}{cccc}T_0(-1) & T_1(-1) & T_2(-1) & \cdots \\ T_0(1) & T_1(1) & T_2(1) & \cdots \end{array}\r]=\l[\begin{array}{ccccc} 1 & -1 & 1 & -1 &\cdots\\ 1 & 1 & 1 & 1 & \cdots \end{array}\r],$$ and for Neumann conditions $$\mcalb=\l[\begin{array}{cccc}T_0'(-1) & T_1'(-1) & T_2'(-1) & \cdots \\ T_0'(1) & T_1'(1) & T_2'(1) & \cdots \end{array}\r]=\l[\begin{array}{cccccc} 0 & 1 & -4 & \cdots & (-1)^{j+1}j^2 & \cdots\\ 0 & 1 & 4 & \cdots & j^2& \cdots \end{array}\r].$$    
Let $\mcalp_n$ be the projection operator given by $$\mcalp_n=\l[\begin{array}{cc}  {\bf I}_n & {\bf 0}  \end{array}\r].$$ We obtain the following linear system \beqs\label{mode}{\bf A}_n{\bf u}_n:=\l[\begin{array}{c} \mcalb\mcalp_n^\rmt \\ \mcalp_{n-m}\mcall\mcalp_n^\rmt \end{array}\r] \mcalp_n{\bf u} =\l[\begin{array}{c} {\bf b}\\ \mcalp_{n-m}\mcals_{m-1}\cdots\mcals_0{\bf f} \end{array}\r].\eeqs The solution $u(x)$ of (\ref{code})-(\ref{mlc}) is then approximated by the $n$-term Chebyshev series: $$u(x)\approx\sum_{j=0}^{n-1}u_jT_j(x).$$

The condition number of the matrix ${\bf A}_n$ grows like $\mcalo(n)$. Define the diagonal preconditioner $$\mcalr=\frac{1}{2^{m-1}(m-1)!}\diag\l\{{\bf I}_m, \frac{1}{m},\frac{1}{m+1},\cdots\r\}.$$ It was proved in \cite{olver2013fast} that $$ \mcalc=\l[\begin{array}{c}\mcalb\\ \mcall \end{array}\r]\mcalr-\mcali$$ is a compact operator. Therefore, the preconditioner $\mcalr$ results in a linear system with a bounded condition number.
 
\section{A Chebyshev spectral method for the integral reformulation} In this section, we propose a Chebyshev spectral method for the integral reformulation (\ref{intref2}). We refer to \cite{clenshaw1957numer,fox1961cheby,elgendi1969cheby,zebib1984cheby} for Chebyshev spectral methods for differential, integral and integro-differential equations.   

Representing $v(x)$ and $\p_x^{-k} v(x)$ by Chebyshev series \beqas v(x)=\sum_{j=0}^\infty v_jT_j(x),\qquad \p_x^{-k} v(x) =\sum_{j=0}^\infty v^{(-k)}_jT_j(x),\eeqas we have (see \cite{greengard1991spect}) $${\bf v}^{(-k)}=\mcalq^k {\bf v},$$ where 
$${\bf v}=[\begin{array}{ccc}v_0 & v_1 & \cdots \end{array}]^\rmt,\qquad {\bf v}^{(-k)}=[\begin{array}{ccc}v_0^{(-k)} & v_1^{(-k)} & \cdots \end{array}]^\rmt,$$ and, for $i,j=0,1,\ldots,\infty$,
$$\mcalq=\l[\begin{array}{ccccccccc} 1 & -\frac{1}{4} & -\frac{1}{3} &  \frac{1}{8} &  -\frac{1}{15}& \frac{1}{24} & \cdots & \frac{(-1)^{j+1}}{(j-1)(j+1)} & \cdots
\\ 1 & 0 & -\frac{1}{2} & 0 &0&0&\cdots & \cdots & \cdots 
\\ 0 & \frac{1}{4} & 0 & -\frac{1}{4} & 0 & 0&\ddots&\ddots& \ddots  
\\ \vdots & \ddots &\ddots &\ddots&\ddots&\ddots &\ddots &\ddots& \ddots
\\ \vdots & \ddots &\ddots & \frac{1}{2i} &\ddots & -\frac{1}{2i} & \ddots & \ddots & \ddots 
\\ \vdots & \ddots &\ddots &\ddots &\ddots &\ddots& \ddots & \ddots & \ddots
\end{array}\r].$$ 
It is easy to show that \beq\label{dqk}\mcald_k\mcalq^k=\mcals_{k-1}\mcald_{k-1}\mcalq^{k-1}=\cdots=\mcals_{k-1}\mcals_{k-2}\cdots\mcals_0.\eeq 

In the rest of this paper, we represent $\mcala$ and $\mcalx$ in terms of  the Chebyshev coefficients of their elements. For example, $$\mcalx=\l[\begin{array}{cccc} {\bf x}^0& {\bf x}^1 & \cdots & {\bf x}^{m-1} \end{array}\r].$$ We list first five ${\bf x}^j$ below \beqas {\bf x}^0&=&\l[\begin{array}{cccccc}1 & 0 & \cdots &  \end{array}\r]^\rmt,\\ {\bf x}^1&=&\l[\begin{array}{cccccc}0 & 1 & 0 & \cdots \end{array}\r]^\rmt,\\{\bf x}^2&=&\l[\begin{array}{cccccc}\frac{1}{2} & 0 & \frac{1}{2} & 0 & \cdots \end{array}\r]^\rmt,\\ {\bf x}^3&=&\l[\begin{array}{cccccc}0 & \frac{3}{4} & 0 & \frac{1}{4} & 0 & \cdots \end{array}\r]^\rmt,\\ {\bf x}^4&=&\l[\begin{array}{ccccccc} \frac{3}{8} & 0 & \frac{1}{2} & 0 & \frac{1}{8} & 0 & \cdots \end{array}\r]^\rmt.\eeqas
The integral equation (\ref{intref2}) can be rewritten as \beq\label{scop1}\wt\mcall{\bf v}={\bf f}-\mcala(\mcalb\mcalx)^{-1}{\bf b},\eeq where $$\wt\mcall:=\mcali+\sum_{k=0}^{m-1}\mcalm_0[a^k]\mcalq^{m-k}-\mcala(\mcalb\mcalx)^{-1}\mcalb\mcalq^m.$$ 

The equations $(\ref{dcode})$ and $(\ref{scop1})$ are related as follows.
Substituting $${\bf u}=\mcalq^m{\bf v}+\mcalx(\mcalb\mcalx)^{-1}(\bf b-\mcalb\mcalq^m{\bf v})$$ into $(\ref{dcode})$, and utilizing $(\ref{mulcon})$, $(\ref{dqk})$, and $$\mcall\mcalx=\mcals_{m-1}\mcals_{m-2}\cdots\mcals_0\mcala,$$ we obtain 
\beqs \mcals_{m-1}\mcals_{m-2}\cdots\mcals_0\wt\mcall{\bf v}=\mcals_{m-1}\mcals_{m-2}\cdots\mcals_0\l({\bf f}-\mcala(\mcalb\mcalx)^{-1}{\bf b}\r),\eeqs which can also be obtained by multiplying $(\ref{scop1})$ by $\mcals_{m-1}\mcals_{m-2}\cdots\mcals_0$ from the left. 

By truncating (\ref{scop1}), we obtain the following linear system 
\beqa\label{sc}\wt {\bf A}_n{\bf v}_n:=\mcalp_n\wt\mcall\mcalp_n^\rmt \mcalp_n{\bf v} =\mcalp_n\l({\bf f}-\mcala(\mcalb\mcalx)^{-1}{\bf b}\r).\eeqa After solving (\ref{sc}), we obtain $${\bf u}_n=\mcalp_n{\bf u}\approx \mcalp_n\mcalq^m\mcalp_n^\rmt{\bf v}_n+\mcalp_n\mcalx(\mcalb\mcalx)^{-1}\l({\bf b}-\mcalb\mcalq^m\mcalp_n^\rmt{\bf v}_n\r).$$
The solution $u(x)$ of (\ref{code})-(\ref{mlc}) is then approximated by the $n$-term Chebyshev series: $$u(x)\approx\sum_{j=0}^{n-1}u_jT_j(x).$$

The Chebyshev spectral scheme converges at the same rate as $\mcalp_n^\rmt\mcalp_n \bf v$ converges to $\bf v$. The well-conditioning of $\wt {\bf A}_n$ is obvious. Assume that the variable coefficients $a^k(x)$ can be accurately approximated by low-order polynomials. The almost banded structure of $\wt{\bf A}_n$ follows from the almost banded structure of the spectral integral operator $\mcalq$ and the banded structure of the multiplication operator $\mcalm_0[a^k]$. The number of dense rows of $\wt{\bf A}_n$ is about $\max(j,m)$, where $j$ is the number of the Chebyshev coefficients needed to resolve the variable coefficients $a^k(x)$. The adaptive QR approach \cite{olver2013fast,olver2014pract} still applies to (\ref{scop1}). 
Since $\mcalm_0[a^k]$ is a Toeplitz plus an almost Hankel operator and $\mcalq$ is an almost banded operator, then the matrix-vector product for the matrix $\wt{\bf A}_n$ and an $n$-vector can be obtained in $\mcalo(n\log n)$ operators. Therefore, if $j$ is large, iterative solvers can be employed to achieve computational efficiency.

\section{Numerical experiments} 
In this section, computational results are reported for three examples (all from \cite{olver2013fast}) to test the ultraspherical spectral {\rm(US)} method, the diagonally preconditioned ultraspherical spectral {\rm(P-US)} method, and the Chebyshev spectral {\rm(CS)} method. 
All computations are performed with MATLAB and the Chebfun software system \cite{driscoll2014chebf}.

\subsection*{Example 1}
Consider the linear differential equation $$u'(x)+x^3u(x)=100\sin(20,000x^2),\qquad u(-1)=0.$$ The exact solution is $$u(x)=\exp\l(-\frac{x^4}{4}\r)\int_{-1}^x 100\exp\l(\frac{t^4}{4}\r)\sin(20,000t^2)\rmd t.$$ 

\begin{figure}[!htpb]
\centerline{\epsfig{figure=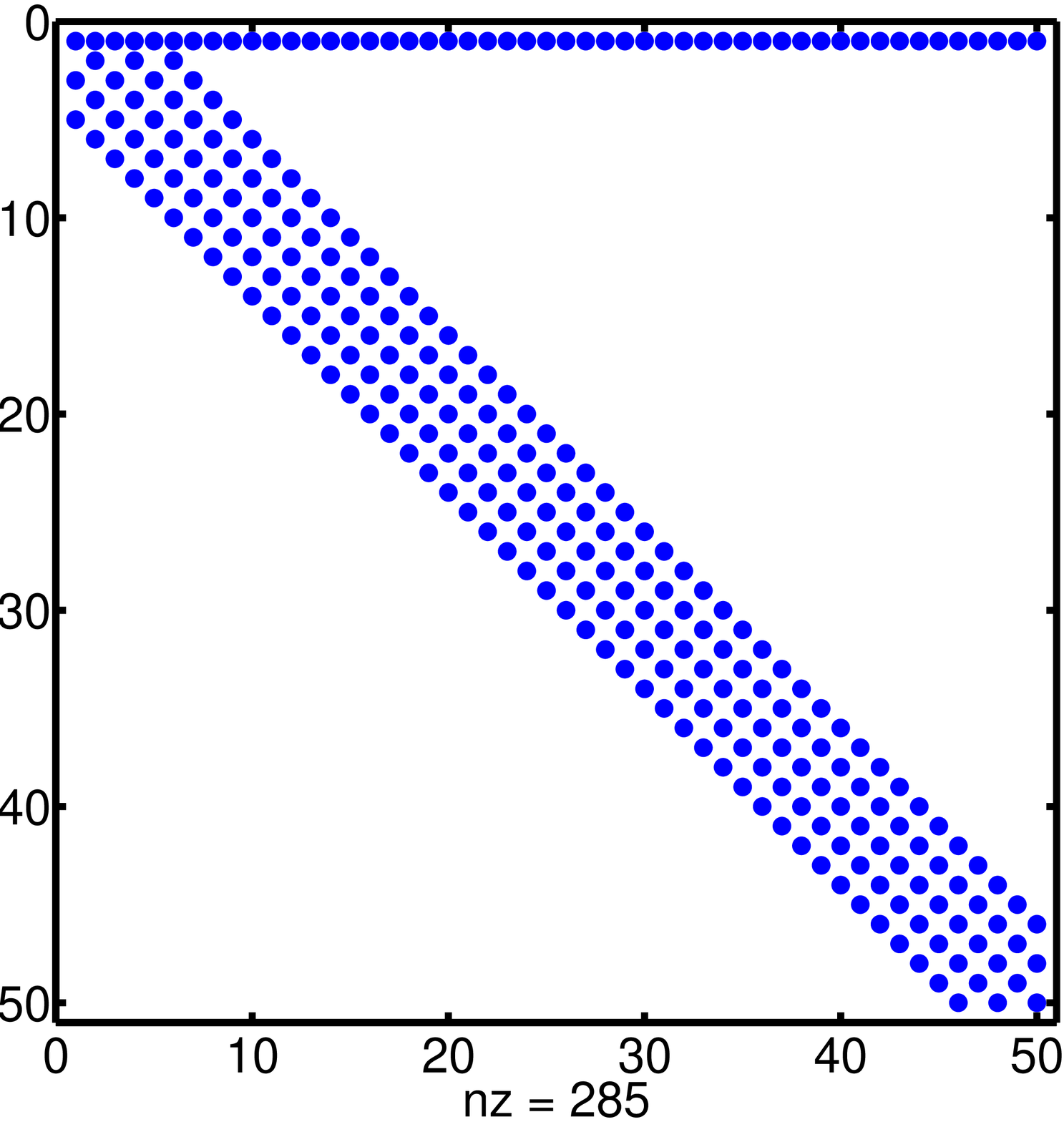,height=2.2in}
\epsfig{figure=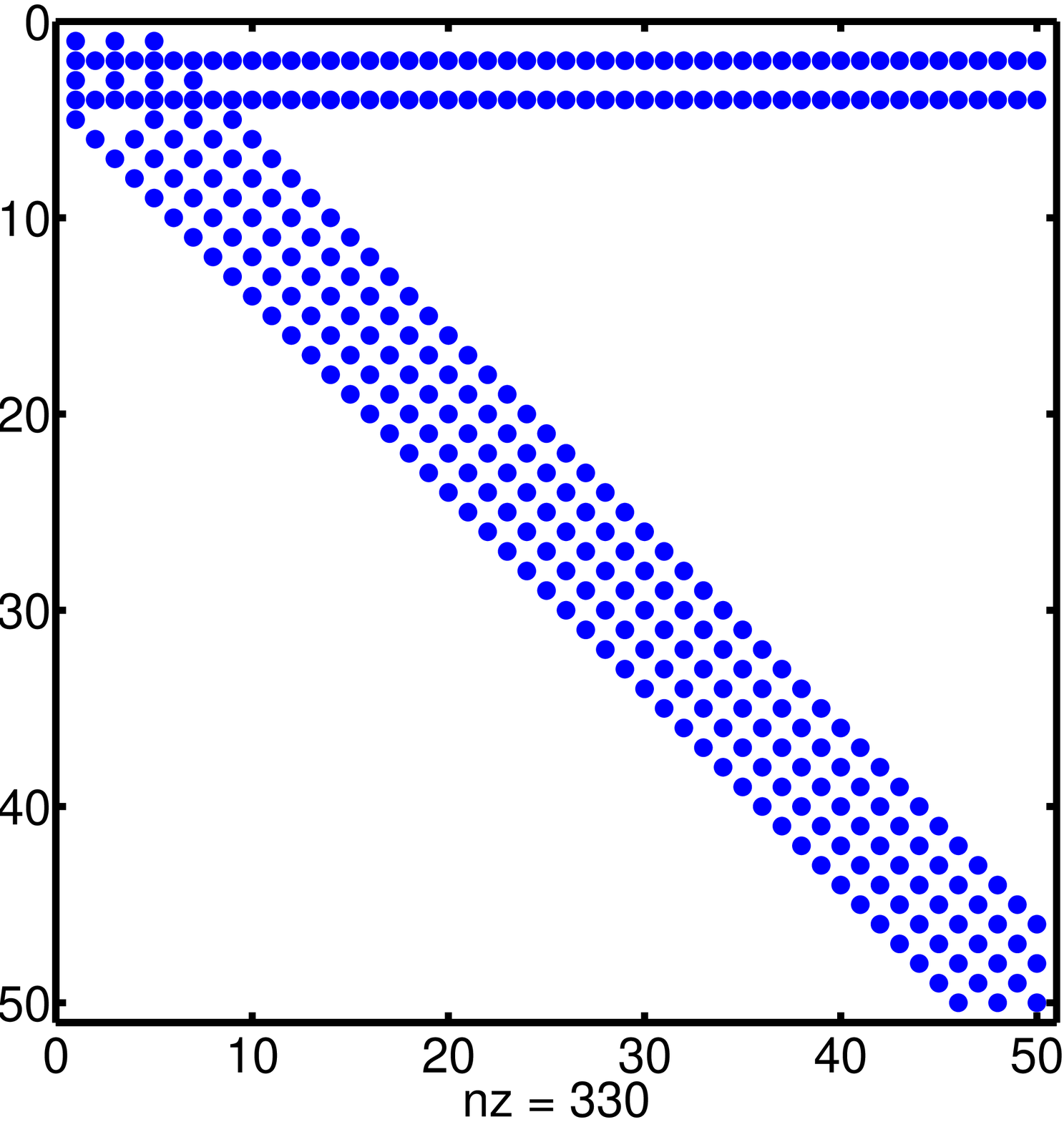,height=2.2in}}
\caption{Sparsity patterns of ${\bf A}_n$ {\rm(left)} in the ultraspherical spectral method and $\wt{\bf A}_n$ {\rm (right)} in the Chebyshev spectral method for Example $1$.}  \label{fig1}
\end{figure} 

\begin{table}[htp]
\caption{Comparison of condition numbers of the matrices for Example $1$.}
\label{e1t1}
\begin{center} \footnotesize
\begin{tabular}{c|c|c|c} \toprule
 $n$& US & P-US& CS \\ \hline
 128& 2.4045e+02 & 3.9813 &  2.5955\\
 256& 4.8312e+02 & 3.9864 &  2.5955\\
 512& 9.6846e+03 & 3.9889 &  2.5955\\
1024& 1.9391e+04 & 3.9901 &  2.5955\\ \bottomrule
\end{tabular}
\end{center}
\end{table}

In Figure \ref{fig1} we plot the sparsity patterns of the matrices ${\bf A}_n$ and $\wt{\bf A}_n$ when $n = 50$ to show the almost banded structures. We report the condition numbers in Table \ref{e1t1} and observe that the condition number of the US method behaves like $\mcalo(n)$, while those of the P-US method and the CS method remain a constant. The computed oscillatory solution and its first-order derivative by the CS method are plotted in Figure \ref{fig2}. The $L^2$ norm errors for the computed first-order derivative approximating the exact first-order derivative to machine precision and the computed solution by the CS method are $$\l(\int_{-1}^1(u'(x)-\wt u'(x))^2\rmd x\r)^{\frac{1}{2}}=0,$$ and $$\l(\int_{-1}^1(u(x)-\wt u(x))^2\rmd x\r)^{\frac{1}{2}}=1.2602\times 10^{-14},$$ respectively.

\begin{figure}[!htpb]
\centerline{\epsfig{figure=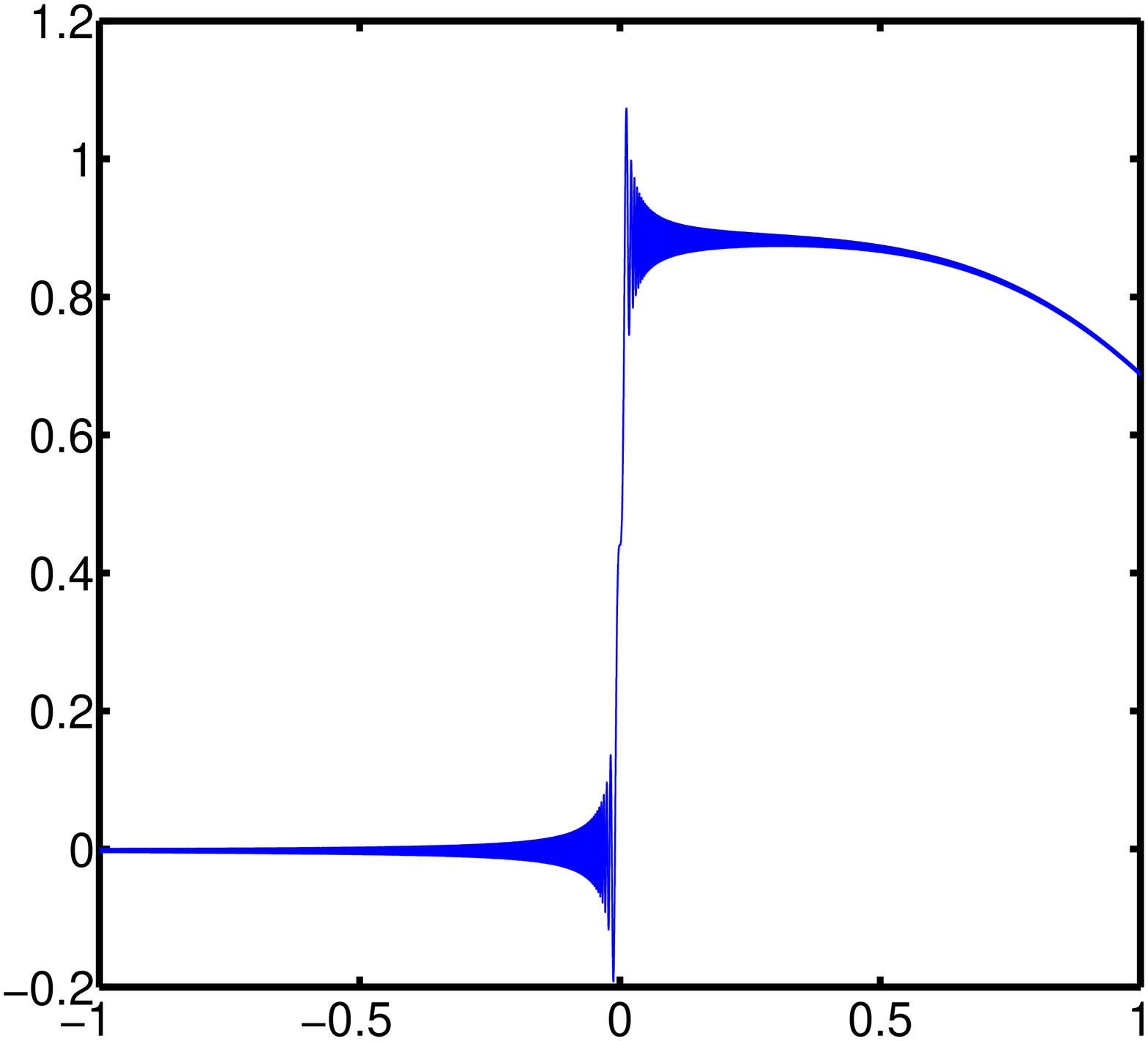,height=2.2in}
\epsfig{figure=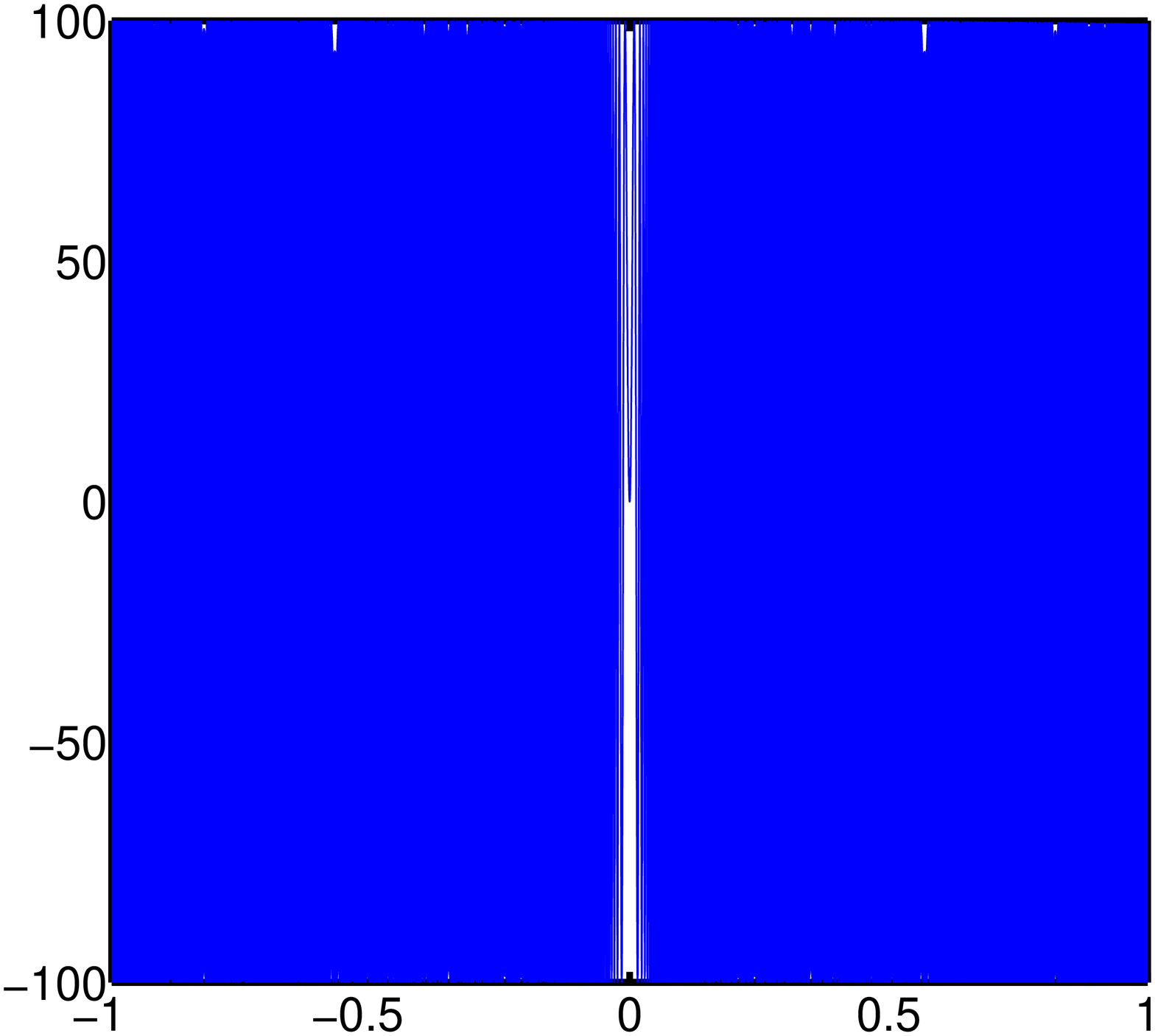,height=2.2in}}
\caption{The highly oscillatory solution {\rm(left)} and its first-order derivative {\rm (right)} computed by the Chebyshev spectral method for Example $1$.}  \label{fig2}
\end{figure}

\subsection*{Example 2} Consider the linear differential equation $$u'(x)+\frac{1}{ax^2+1}u(x)=0,\qquad u(-1)=1.$$ The exact solution is $$u(x)=\exp\l(-\frac{\arctan(\sqrt{a}x)+\arctan(\sqrt{a})}{ \sqrt{a} }\r).$$ We take $a=50,000$. The variable coefficient function can be approximated to roughly machine precision by a polynomial of degree $7315$. The multiplication operator $\mcalm_0[a^0]$ has a very large bandwidth, resulting in essentially dense linear systems.

We compare condition numbers (in Table \ref{e2t1}), number of iterations (using Bi-CGSTAB in Matlab with TOL$=10^{-14}$, in Figure \ref{e2fig1} (left)), and $L_2$ norm errors (in Figure \ref{e2fig1} (right)) of US, P-US and CS. Observe from Table \ref{e2t1} that the condition number of the US method behaves like $\mcalo(n)$, while those of the P-US method and the CS method remain a constant even for $n$ up to $8192$. As a result, the P-US method and the CS method only require several iterations to converge (see Figure \ref{e2fig1} (left)), while the usual US scheme requires much more iterations with a degradation of accuracy as depicted in Figure \ref{e2fig1} (right). 

\begin{table}[htp]
\caption{Comparison of condition numbers of the matrices for Example $2$.}
\label{e2t1}
\begin{center} \footnotesize
\begin{tabular}{c|c|c|c} \toprule
 $n$& US & P-US& CS \\ \hline
1024& 1.7953e+03 & 3.3256 &  1.0912\\
2048& 3.5925e+03 & 3.3261 &  1.0912\\
4096& 7.1870e+03 & 3.3264 &  1.0912\\
8192& 1.4376e+04 & 3.3265 &  1.0912\\ \bottomrule
\end{tabular}
\end{center}
\end{table}

\begin{figure}[!htpb]
\centerline{\epsfig{figure=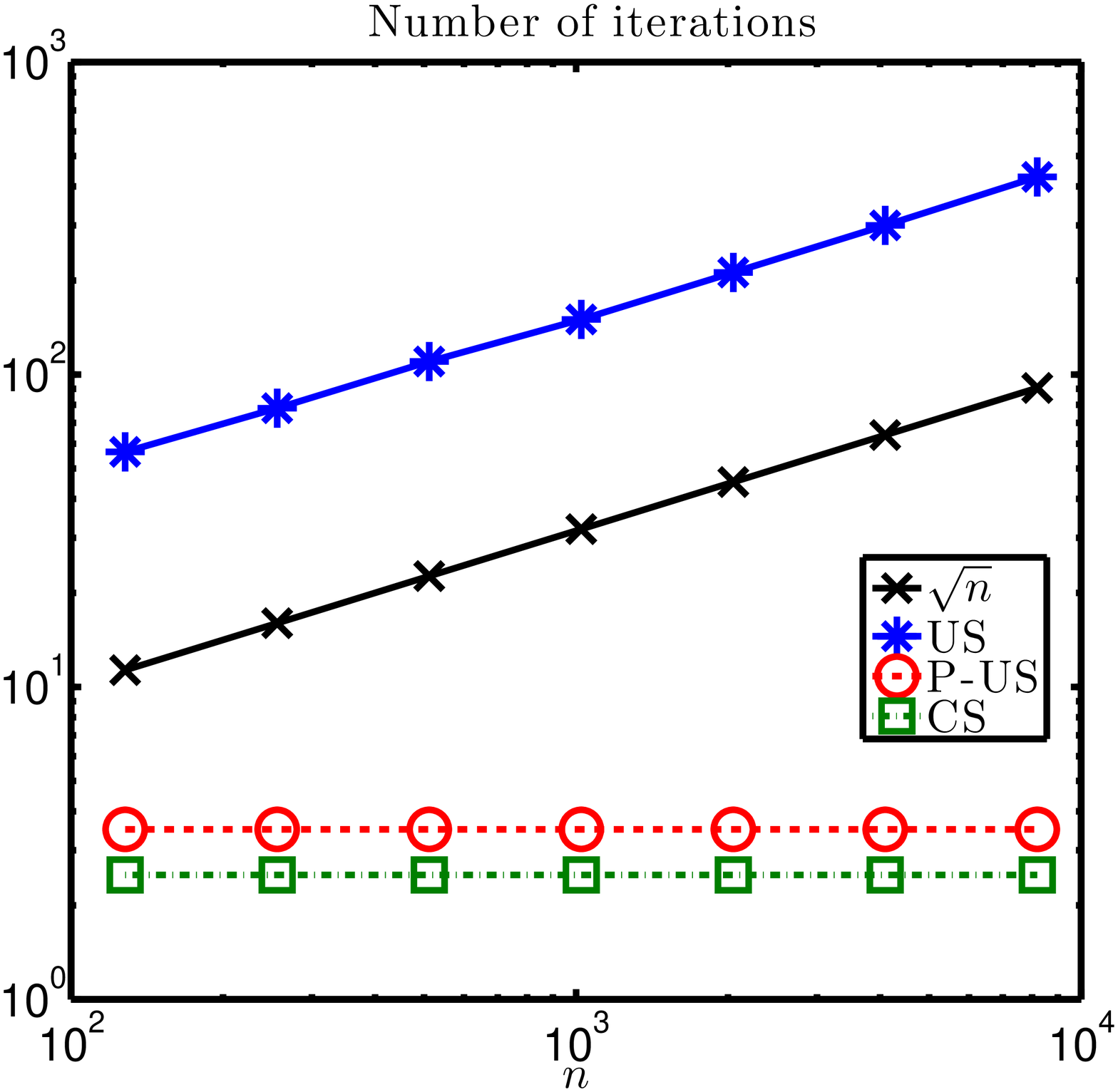,height=2.1in}
\epsfig{figure=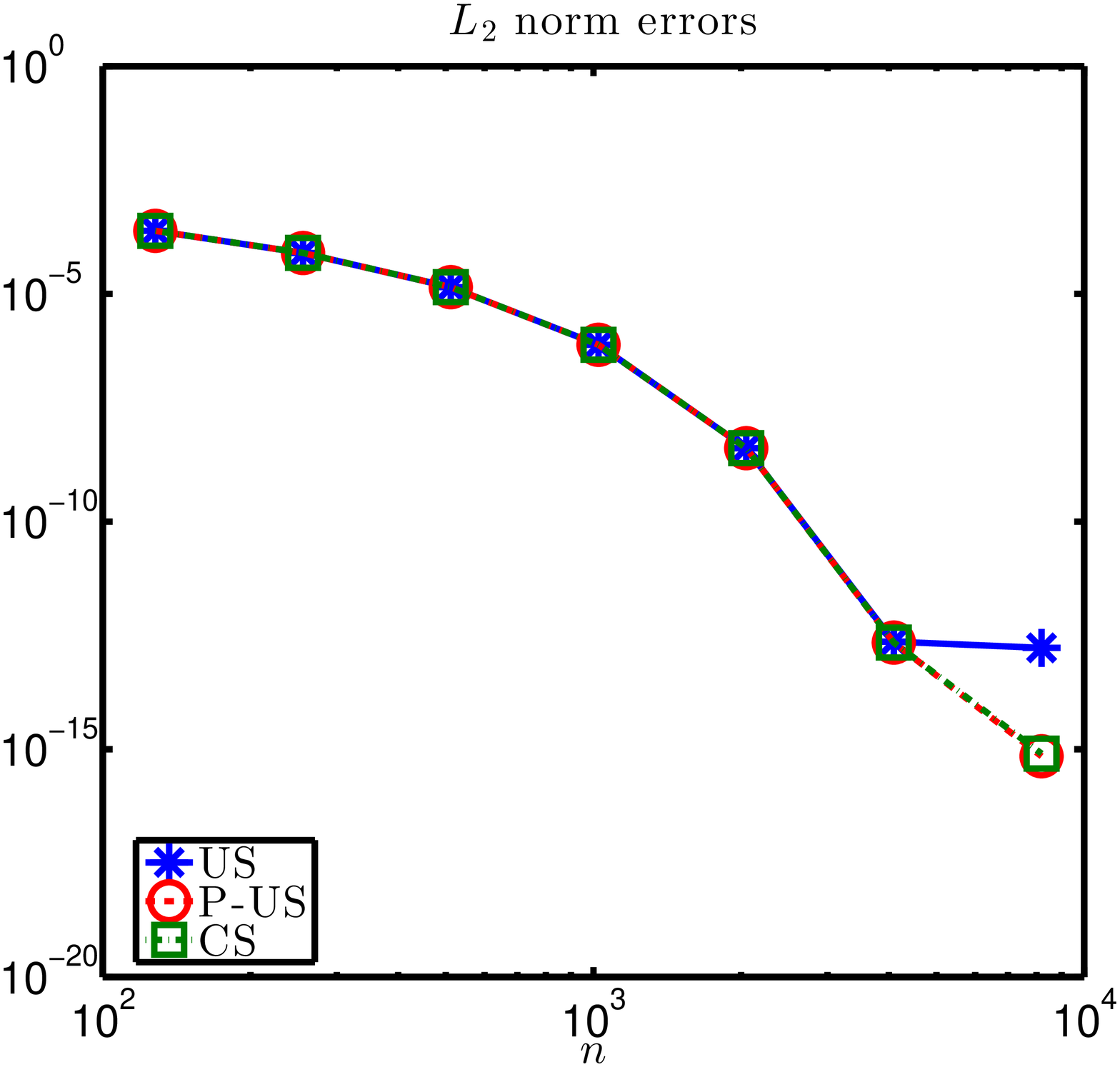,height=2.1in}}
\caption{Number of iterations {\rm(left)} and $L_2$ norm errors {\rm(right)} by the ultraspherical spectral method and the Chebyshev spectral method for Example $2$.}  \label{e2fig1}
\end{figure}

\subsection*{Example 3} Consider the high order differential equation $$u^{(10)}(x)+\cosh(x)u^{(8)}(x)+x^2u^{(6)}(x)+x^4u^{(4)}+\cos(x)u^{(2)}(x)+x^2u(x)=0,$$ with boundary conditions $$u(\pm1)=0,\qquad u'(\pm1)=1,\qquad u^{(k)}(\pm1)=0,\qquad 2\leq k\leq 4.$$

In Figure \ref{fig3} we plot the sparsity pattern of the matrix $\wt{\bf A}_n$ when $n = 100$ and the computed solution by the Chebyshev spectral method. We also note that the condition number of $\wt{\bf A}_n$ remains a constant (about $1.7444$) for different values of $n$. The computed solution is odd to about machine precision, $$\l(\int_{-1}^1(u(x)-\wt u(x))^2\rmd x\r)^{\frac{1}{2}}=5.2171\times 10^{-14}.$$

\begin{figure}[!htpb]
\centerline{\epsfig{figure=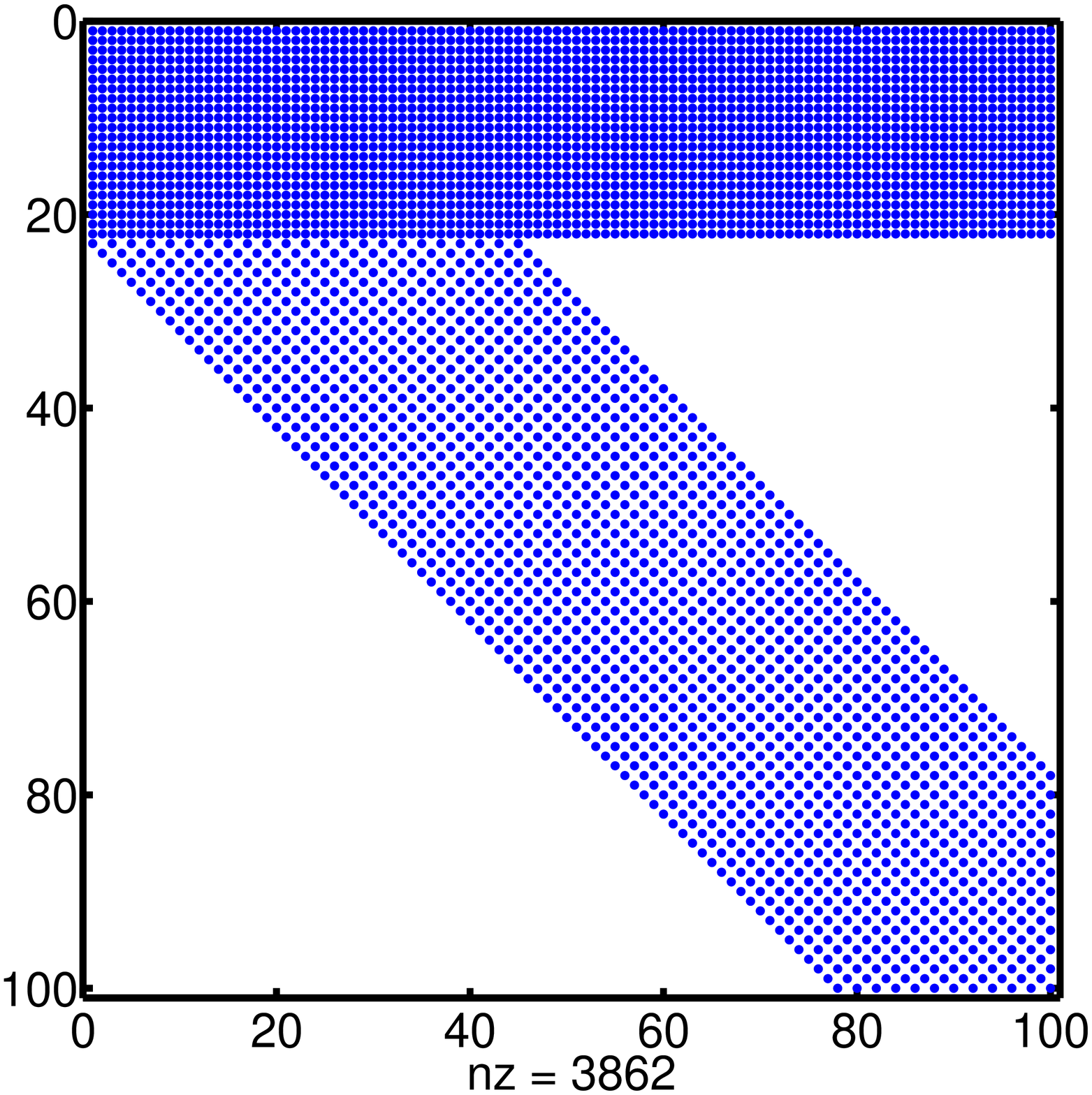,height=2.2in}
\epsfig{figure=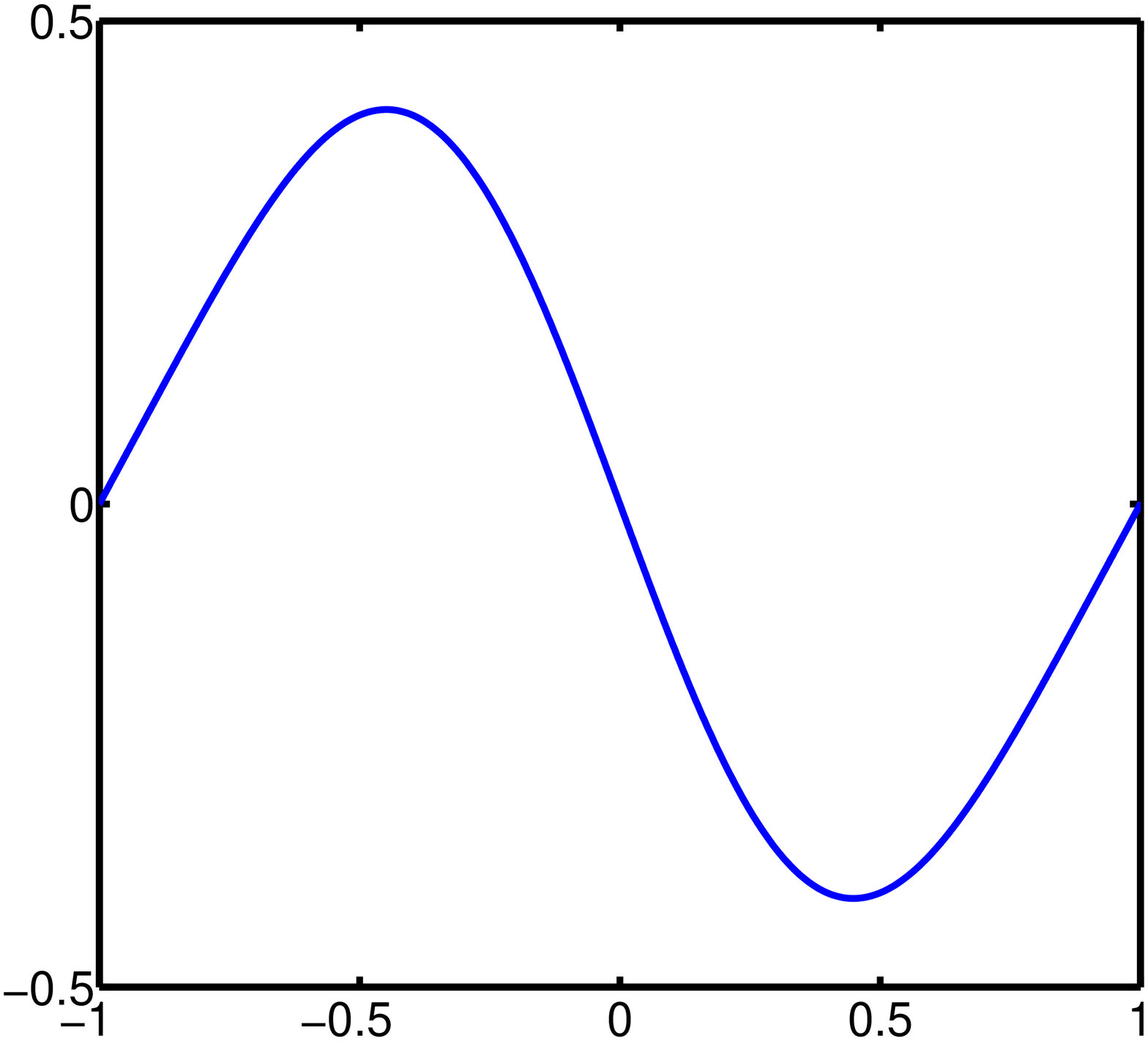,height=2.2in}}
\caption{The sparsity pattern of $\wt{\bf A}_n$ {\rm (left)} and the computed solution {\rm(right)} by the Chebyshev spectral method for Example $3$.}  \label{fig3}
\end{figure} 

\section{Concluding remarks} We have revisited the well-conditioned spectral methods \cite{driscoll2010autom,wang2014well} and the ultraspherical spectral method \cite{olver2013fast} from the viewpoint of the integral reformulation (\ref{intref2}). We also proposed a Chebyshev spectral method for the integral reformulation (\ref{intref2}), which preserves the almost banded structure, avoids the conversion operators $\mcals_k$ and only needs the multiplication operators $\mcalm_0[a^k]$. Therefore, the Chebyshev spectral method is very easy to implement.  The well-conditioning of these methods come from that of the integral operator. The integral reformulation approach can also be used to interpret the well-conditioning of the fractional spectral collocation methods \cite{jiao2015well,du2015prefsc}.
However, we have to mention that although it is independent of the discretization parameter, the condition number of the coefficient matrix may be very large. For example, see the singular perturbation problem with a tiny parameter \cite{olver2013fast,du2015prersc}. 

Newton iteration techniques and the tensor-product techniques \cite{julien2009effic,townsend2013exten,townsend2015autom,du2015two} can be used in the extensions of this work to nonlinear problems and high-dimensional problems,  respectively. Recently, Shen, Wang and Xia \cite{shen2015fast} proposed a fast structured direct spectral method for differential equations with variable coefficients by employing the low-rank property of the coefficient matrix. The computational complexity of their method is nearly linear. Numerical experiments (including variable coefficients with steep gradients) show that the low-rank property still holds for the coefficient matrix in the Chebyshev spectral method proposed in this work. Theoretical explanation for this point is being investigated and will be reported elsewhere.

\section*{Acknowledgments} The author would like to thank Prof. Jan S. Hesthaven for  valuable comments on the manuscript \cite{du2015prersc}. The author also thanks Dr. Can Huang for the discussion about compact operators and fractional spectral collocation methods. 

\end{document}